\theoremstyle{plain}
\newtheorem*{question*}{Question}
\newtheorem{theorem}{Theorem}
\newtheorem{proposition}{Proposition}
\newtheorem{lemma}{Lemma}
\newtheorem*{definition*}{Definition}
\theoremstyle{definition}
\theoremstyle{remark}
\newtheorem{remark}{Remark}
\newtheorem*{remark*}{Remark}
\newcommand{\A}{\mathfrak A}
\newenvironment{enumalph}
{\begin{enumerate}}
{\end{enumerate}}
\def\ZZ{\mathbb Z}
\def\QQ{\mathbb Q}
\def\F{\mathbb F}
\def\OO{\mathcal{O}}   
\def\epsilon{\varepsilon}
\def\phi{\varphi}
\def\f{\mathfrak f}
\def\EKplus{ E_{K}^{+} }
\def\EK4{E_{K,4} }
\def\EKsquares{ E_{K}^{\, \square + \square} }
\def\rank{\text{rank}_2 \, }
\def\norm#1{\text{Norm}_{K/\QQ} ( #1 )} 
\def\order#1{\vert{#1}\vert}    
\DeclareMathOperator{\iso}{\simeq}
\begin{document}

\title[Classes of order 4]
{Classes of order 4 in the strict class group of number fields and
remarks on unramified quadratic extensions of unit type}

\author{David S.\ Dummit}
\address{Department of Mathematics, University of Vermont, Lord House, 16 Colchester Ave., Burlington, VT 05405, USA}
\email{dummit@math.uvm.edu}

\date{\today}

\begin{abstract}
Let $K$ be a number field of degree $n$ over $\QQ$.  Then the 4-rank of the strict class group of 
$K$ is at least $\rank ( \EKplus / E_K^2) -  \lfloor n /2 \rfloor$ where $E_K$
and $\EKplus$ denote the units and the totally positive units of $K$,
respectively, and $\text{rank}_2$ is the dimension as an elementary abelian 2-group.
In particular, the strict class group of a totally real field $K$ 
with a totally positive system of fundamental units contains at least
$(n-1)/2$ ($n$ odd) or $n/2 -1$ ($n$ even) independent elements of order 4.
We also investigate when units in $K$ are sums of two squares in $K$ or are squares mod 4 in $K$.

\bigskip
\noindent
{{\sc R\'esum\'e.}}
Soit $K$ est un corps de nombres de degr\'e $n$ sur $\QQ$.
Alors le 4-rang du groupe de classes au sens strict de $K$ est au moins 
$\text{rang}_2 ( \EKplus / E_K^2) -  \lfloor n /2 \rfloor$
o\`u $E_K$ et $\EKplus$ d\'esignent 
respectivement le groupe des unit\'es et le groupe 
des unit\'es totalement positives de $K$,
et o\`u $\text{rang}_2$ se veut la dimension en tant que 
2-groupe ab\'elien \'el\'ementaire.
En particulier, le groupe de classes au sens strict d'un corps $K$ totalement r\'eel 
avec un syst\`eme fondamental d'unit\'es totalement positives contient au moins
$(n-1)/2$ pour $n$ impair (respectivement $n/2 -1$ pour $n$ pair) \'el\'ements ind\'ependants d'ordre 4.
Nous nous demandons aussi quand les unit\'es de $K$ sont des sommes de deux carr\'es de $K$
ou sont des carr\'es mod 4 de $K$.
\end{abstract}

\subjclass[2010]{11R29 (primary), and 11R37, 11R27, 11E25 (secondary)} 

\maketitle

\section{Introduction}

In 1967 Armitage and Fr\"ohlich proved a result involving the 2-ranks of the usual
class group and the strict (or ``narrow'') class group of a number field $K$.  They showed
in particular that if there are many totally positive units in $K$ then there are independent
elements of order 2 in the class group of $K$.  A result of Hayes in 1997 shows that 
much of this contribution to the class group of $K$ is provided by unramified extensions
$K(\sqrt \epsilon)$ where $\epsilon$ is a totally positive unit in $K$ (the ``unramified quadratic extensions
of unit type'').

In the first part of this paper, we show that the Armitage-Fr\"ohlich theorem implies
that if there are many totally positive units in $K$ then there are independent
elements of order 4 in the strict class group of $K$.

In the second part of this paper, we make some remarks related to the unramified 
quadratic extensions of unit type considered by Hayes.  In particular, the question
of whether they can be embedded in cyclic quartic extensions (which might provide the 
classes of order 4 in the strict class group) leads to consideration of canonical
subgroups of the group of units of a number field.

\section{Classes of order 4 in the strict class group}

Let $K$ be an algebraic number field of degree $n$ over $\QQ$ having $r_1$ real and $r_2$ complex places.
If $C_K$ denotes the class group of $K$ and $C_K^+$ denotes the strict class group of $K$, then there is an
exact sequence
\begin{equation} \label{eq:fundseq}
0 \rightarrow P_K/P_K^+ \rightarrow C_K^+ \rightarrow C_K \rightarrow 0  
\end{equation}
where $P_K$ denotes the group of principal ideals, and $P_K^+$ the group of totally positive principal ideals.
The natural map $\alpha \mapsto (\alpha)$ gives an exact sequence
\begin{equation} \label{eq:fundseqKE}
1 \rightarrow E_K \rightarrow K^* \rightarrow P_K \rightarrow 1 ,
\end{equation}
where $E_K$ denotes the group of units of $K$.  
The image in $P_K$ of the subgroup $K^{*+}$ of totally positive elements of $K^*$ is $P_K^+$, which gives the isomorphism
\begin{equation} \label{eq:principalisom}
P_K/P_K^+ \iso K^* / E_K K^{*+} ,
\end{equation}
so \eqref{eq:fundseq} may be written
\begin{equation} \label{eq:fundseq2}
0 \rightarrow K^* / E_K K^{*+} \rightarrow C_K^+ \rightarrow C_K \rightarrow 0  . 
\end{equation}
The group $K^* / E_K K^{*+}$ is an elementary abelian 2-group, and the sequence \eqref{eq:fundseq2} splits if and only
if the corresponding sequence with $C_K$ and $C_K^+$ replaced by their 2-primary parts splits.

Let $\F_2$ denote the field of order 2 and for a finite abelian group $A$, let $\rank (A) $ denote the 
dimension over $\F_2$ of $A/2A$ (equivalently, the dimension over $\F_2$ of $A[2] = \{ a \in A \mid 2 a = 0 \}$).
By the 4-rank of a finite abelian group $A$ we mean the number of invariant factors of $A$ divisible by
4, or, equivalently, $\rank (2 A)$.

Define
\begin{align*}
\rho & =  \rank C_K \\
\rho^+ & =  \rank C_K^+ \\
\rho_\infty & = \rank ( P_K / P_K^+ )  
\end{align*}

The homomorphism from $K^*$ to $P_K$ in \eqref{eq:fundseqKE} mapping $E_K K^{*+}$ to $P_K^+$, together 
with several natural inclusions, gives the diagram 
$$
\beginpicture
\setcoordinatesystem units <1 pt,1 pt>
\linethickness= 0.3pt
\put {$E_K^2$} at  0 0
\put {$\EKplus$} at  0 40  
\put {$E_K$} at  0 80 
\put {$K^{*+}$} at  50 40      
\put {$E_K K^{*+}$} at  55 80   
\put {$K^{*}$} at  50 120 
\put {$P_K^+$} at  102 80   
\put {$P_K$} at  100 120 
\putrule from 0 10 to 0 30 
\putrule from 0 50 to 0 70 
\putrule from 50 50 to 50 70 
\putrule from 50 90 to 50 110 
\putrule from 98 90 to 98 110 
\arrow<5pt> [.2,.67] from 10 40 to 35 40
\arrow<5pt> [.2,.67] from 10 80 to 35 80
\arrow<5pt> [.2,.67] from 75 80 to 90 80
\arrow<5pt> [.2,.67] from 70 120 to 90 120
\endpicture
$$
from which various rank relations can be determined, as follows.
The signature map taking an element of $K^*$ to its sign in each of the $r_1$ real embeddings of $K$
is a surjective map from $K^*$ to $ \{ \pm 1 \}^{r_1}$ with kernel $K^{*+}$, so $K^{*} / K^{*+}$ is an
elementary abelian 2-group with $\rank (K^{*} / K^{*+} ) = r_1$.  
We have $\rank ( K^* / E_K K^{*+} )  = \rank ( P_K / P_K^+ )  =  \rho_\infty $ by the isomorphism \eqref{eq:principalisom}, and
$ E_K \cap K^{*+} = \EKplus$ shows $E_K K^{*+} / K^{*+} \iso E_K / (E_K \cap K^{*+}) = E_K / \EKplus$
so that $\rank ( E_K K^{*+} / K^{*+} )  = \rank ( E_K / \EKplus )$.  
Since the group 
$E_K / E_K^2$ is an elementary abelian 2-group with $\rank ( E_K / E_K^2 ) = r_1 + r_2$ by the Dirichlet unit theorem, we deduce
the following rank relations:
\begin{align*}
\rank ( K^* / E_K K^{*+} ) & = \rank ( P_K / P_K^+ ) =  \rho_\infty \\
\rank ( E_K K^{*+} / K^{*+} ) & = \rank ( E_K / \EKplus ) = r_1 - \rho_\infty \\
\rank ( \EKplus / E_K^2) & =   r_2 + \rho_\infty . \\
\end{align*}

We now examine more carefully group extensions of abelian 2-groups as in the (2-primary parts of the) 
sequences \eqref{eq:fundseq} and  \eqref{eq:fundseq2}.

\begin{proposition} \label{prop:prop1}
Suppose $B$ is a finite abelian 2-group and $A \le B$ is a subgroup with quotient $C$, so
the sequence $0 \rightarrow A \rightarrow B \rightarrow C \rightarrow 0$ 
is exact, where the first map is the inclusion of $A$ into $B$. Then

\begin{enumalph}

\item
the 4-rank of $B$ is at least $(\rank A + \rank C) - \rank B$,

\item
if $A$ is an elementary abelian 2-group, then 
the maximal rank of a subgroup of $A$ that is a direct summand of $B$ is $\rank B - \rank C$ and
the sequence $0 \rightarrow A \rightarrow B \rightarrow C \rightarrow 0$ splits if and only if
$\rank B = \rank A + \rank C$. 
\end{enumalph}

\end{proposition}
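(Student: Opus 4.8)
The plan is to prove (a) by the snake lemma and (b) by first pinning down the maximal rank of a summand of $B$ contained in $A$ and then deriving the splitting criterion formally, using throughout the elementary fact that a surjection of finite abelian groups does not raise $\rank$; applied to the right-exact sequence $A/2A \to B/2B \to C/2C \to 0$ this already gives the general inequality $\rank B \le \rank A + \rank C$.

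For (a), I would apply the snake lemma to the commutative diagram whose two rows are $0 \to A \to B \to C \to 0$ and whose vertical maps are multiplication by $2$. Since all three groups are $2$-groups this produces the exact sequence
$$0 \to A[2] \to B[2] \to C[2] \xrightarrow{\,\delta\,} A/2A \to B/2B \to C/2C \to 0 .$$
Because $A \le B$ the map $A[2] \to B[2]$ is injective, so the image of $B[2]$ in $C[2]$, which is $\ker\delta$, has dimension $\rank B - \rank A$, and hence $\dim_{\F_2}\mathrm{im}\,\delta = \rank A + \rank C - \rank B =: d$ (if $d \le 0$ there is nothing to prove). Unwinding the connecting map, a basis of $\mathrm{im}\,\delta$ is represented by classes $\overline{2y_1},\dots,\overline{2y_d}$ with $y_i \in B$ and $2y_i \in A$ (the latter since the image of $y_i$ in $C$ is killed by $2$). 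Then the elements $z_i = 2y_i$ lie in $A \cap 2B$ and have independent images in $A/2A$, so $H = \langle z_1,\dots,z_d\rangle$ is contained in $2B$ and has $\rank H \ge d$ (its image in $A/2A$ is $d$-dimensional and is a quotient of $H/2H$). Since the $4$-rank of $B$ is $\rank(2B) \ge \rank H$, this finishes (a).

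For the summand statement in (b), the upper bound requires no hypothesis on $A$: if $A' \le A$ and $B = A' \oplus B''$, then $B'' \iso B/A'$ surjects onto $B/A = C$, so $\rank A' = \rank B - \rank B'' \le \rank B - \rank C$. For the matching lower bound I would use that $A$ is an $\F_2$-vector space. Exactness of $A/2A \to B/2B \to C/2C \to 0$ together with $A/2A = A$ shows the image of $A$ in $B/2B$, namely $A/(A\cap 2B)$, has dimension $\rank B - \rank C$; pick inside $A$ a complement $A'$ to $A \cap 2B$, so $\rank A' = \rank B - \rank C$ and $A' \cap 2B = 0$. It then suffices to prove that \emph{any} subgroup $U \le B$ with $U \cap 2B = 0$ splits off: the image of $U$ in the $\F_2$-space $B/2B$ is isomorphic to $U$, and if $\bar W$ is a complementary subspace then its preimage $W$ in $B$ (which contains $2B = \ker(B \to B/2B)$) satisfies $B = U \oplus W$, as one checks directly. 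Applying this to $A'$ exhibits a summand of $B$ inside $A$ of rank $\rank B - \rank C$.

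The splitting criterion is then formal. If the sequence splits, $A$ is itself a rank-$\rank A$ summand of $B$, so $\rank A \le \rank B - \rank C$ by the first part, and combined with $\rank B \le \rank A + \rank C$ this forces $\rank B = \rank A + \rank C$. Conversely, if that equality holds, the first part produces a summand $A' \le A$ with $\rank A' = \rank A$, and since $A$ is elementary abelian $A' = A$; writing $B = A \oplus B'$, the composite $B' \hookrightarrow B \to C$ is an isomorphism and gives a section. The only step that is not pure rank bookkeeping is the lemma that $U \cap 2B = 0$ makes $U$ a direct summand, and I expect that to be the main point to get right — but, as indicated, it reduces at once to lifting a vector-space complement from $B/2B$, so it should not cause real trouble.
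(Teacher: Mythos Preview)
Your proof is correct and follows essentially the same route as the paper. The paper introduces $A_1 = A \cap 2B$ and uses the short exact sequence $0 \to A/A_1 \to B/2B \to C/2C \to 0$ (which is exactly the cokernel half of your six-term snake sequence) to get $\rank(A/A_1) = \rank B - \rank C$; for (a) it then bounds the $4$-rank below by $\rank A_1$ directly rather than via your auxiliary subgroup $H \le A_1$, and for (b) it proves the identical lemma that $A' \le A$ with $A' \cap A_1 = 0$ (equivalently $A' \cap 2B = 0$) is a direct summand by lifting a complement from the Frattini quotient $B/2B$, just as you do.
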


\begin{proof}
Let
$$
A_1 = \left \{ a \in A \mid a = 2 b \text{ for some }b \in B \right \}.
$$
It is easy to see (e.g., by the snake lemma or directly) that the sequence of elementary abelian 2-groups
\begin{equation} \label{eq:2gps}
0 \rightarrow A/A_1 \rightarrow B/2B \rightarrow C/2C \rightarrow 0  
\end{equation}
is exact.  In particular, the sequence splits and
$\rank B = \rank (A/A_1) + \rank C$.  Then 
$\rank A_1 + \rank (A/A_1) \ge \rank A$ gives
$\rank B \ge (\rank A -  \rank A_1) + \rank C$, 
i.e.,
$\rank A_1 \ge (\rank A + \rank C) - \rank B$. 
Since the 4-rank of $B$ is at least $\rank A_1$, this proves (a).   

For (b), note first that
since $B/2B$ is the Frattini quotient for $B$, any collection of elements of $B$ projecting to
a basis for the elementary abelian 2-group $B/2B$ will give a minimal set of generators for $B$. It
follows that a subgroup $A'$ of $A$ is a direct summand of $B$ if and only if $A' \cap A_1 = 0$, 
as follows.  If $B = A' \oplus B'$ for some subgroup $B'$, then
$2 B = 2 B'$, so $A' \cap A_1 = A' \cap 2 B \le A' \cap B' =  0$. 
Conversely, suppose $A' \cap A_1 = 0$.  
Then $A'$ injects into the elementary abelian 2-group $B/2B$ in the exact sequence \eqref{eq:2gps}, hence has a complement.
Let $S$ be a collection of elements of $B$ projecting to a basis for a complement to $A'$, and let $B'$ be the subgroup
of $B$ generated by the elements of $S$.  As noted previously, a basis for $A'$ together with the elements of $S$ give
a minimal set of generators for $B$, so $B = A' + B'$.  Any element in the intersection of $A'$ and $B'$ projects to 0 
in $B/2B$ since $B'$ projects to a complement for $A'$ in
$B/2B$, hence would be an element of $A_1$.   Since $A' \cap A_1 = 0$ by assumption, this shows the sum is direct: 
$B = A' \oplus B'$.

It follows that the maximal rank of a subgroup of $A$ that is a direct summand of $B$ 
occurs for a maximal subgroup of $A$ 
intersecting trivially with $A_1$, i.e., for any complement to $A_1$ in $A$.  The rank of such a complement is
$\rank (A/A_1)$, which is precisely $\rank B - \rank C$ by the exact sequence \eqref{eq:2gps}.

Finally, the sequence splits if and only if the maximal subgroup of $A$ that is a direct summand of
$B$ is $A$, which by the previous result happens if and only if $\rank B - \rank C$ equals $\rank A$. 
\end{proof}

\begin{remark}
It is important that $A$ is an elementary abelian 2-group in (b) of the proposition:
if $B = \ZZ / 8 \ZZ \oplus \ZZ / 2 \ZZ$ and $A$ denotes the 
(non-elementary) cyclic subgroup of order 4 generated by $(2,1) \in B$, then the quotient $C = B/A$ is cyclic of order
4 and both statements of (b) are false.
\end{remark}

Applying Proposition \ref{prop:prop1} to the groups in \eqref{eq:fundseq} and  \eqref{eq:fundseq2} immediately gives

\begin{proposition} \label{prop:4rank}
The sequence $ 0 \rightarrow K^* / E_K K^{*+} \rightarrow C_K^+ \rightarrow C_K \rightarrow 0  $
in  \eqref{eq:fundseq2} splits if and only if $\rho^+ = \rho_\infty + \rho$.  The 4-rank of $C_K^+$ is at least
$(\rho_\infty + \rho ) - \rho^+$.
\end{proposition}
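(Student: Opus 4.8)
The plan is to apply Proposition~\ref{prop:prop1} directly to the exact sequence \eqref{eq:fundseq2}, taking $A = K^*/E_K K^{*+}$, $B = C_K^+$, and $C = C_K$, after first passing to 2-primary parts so that the hypotheses of Proposition~\ref{prop:prop1} (which concerns finite abelian 2-groups) are met.

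First I would note that $A$ is already an elementary abelian 2-group, so its image in $C_K^+$ lies in the 2-primary part $(C_K^+)_2$; hence taking 2-primary parts throughout \eqref{eq:fundseq2} gives an exact sequence $0 \to A \to (C_K^+)_2 \to (C_K)_2 \to 0$ in which the subgroup $A$ is unchanged. Since the 2-rank and the 4-rank of a finite abelian group depend only on its 2-primary part, this substitution affects none of the invariants in the statement: $\rank (C_K^+)_2 = \rho^+$, $\rank (C_K)_2 = \rho$, and the 4-rank of $(C_K^+)_2$ equals the 4-rank of $C_K^+$. From the rank relations read off the diagram above, $\rank A = \rank(K^*/E_K K^{*+}) = \rho_\infty$.

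Then Proposition~\ref{prop:prop1}(a), applied to $0 \to A \to (C_K^+)_2 \to (C_K)_2 \to 0$, gives that the 4-rank of $C_K^+$ is at least $(\rank A + \rank (C_K)_2) - \rank(C_K^+)_2 = (\rho_\infty + \rho) - \rho^+$, which is the asserted inequality. For the splitting criterion, since $A$ is elementary abelian, Proposition~\ref{prop:prop1}(b) applies and shows that the 2-primary sequence splits precisely when $\rank(C_K^+)_2 = \rank A + \rank(C_K)_2$, i.e.\ when $\rho^+ = \rho_\infty + \rho$; and, as observed just before Proposition~\ref{prop:prop1}, the sequence \eqref{eq:fundseq2} splits if and only if its 2-primary part does.

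I do not anticipate a real obstacle: the entire content is already packaged into Proposition~\ref{prop:prop1} together with the rank formulas computed from the diagram. The only point needing care is the bookkeeping in the passage to 2-primary parts — verifying that exactness and the relevant 2-adic invariants survive — and that is routine.
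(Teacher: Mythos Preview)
Your proposal is correct and is essentially the paper's own argument: the paper simply says that Proposition~\ref{prop:prop1} applied to the groups in \eqref{eq:fundseq} and \eqref{eq:fundseq2} immediately yields Proposition~\ref{prop:4rank}. You have merely made explicit the routine passage to 2-primary parts (already flagged in the text right after \eqref{eq:fundseq2}) that the paper leaves implicit.
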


In \cite{A-F}, Armitage and Fr\"ohlich prove that $\rho^+ - \rho \le \lfloor r_1 /2 \rfloor$.  While they explicitly state
only a weaker version of this result (namely that  $\rho_\infty - \rho \le \lfloor r_1 /2 \rfloor$), they prove the
stronger version (see equations (3) and (4) of their paper---note $\rho^+ = \dim_2(X_2)$ and 
$\rho = \dim_2(\text{Ker } \rho \cap X_2)$ in their terminology).  Another proof of the Armitage-Fr\"ohlich 
theorem can be found in Oriat \cite{O} (to whom the stronger version is occasionally incorrectly credited), 
a particularly nice proof due to Hayes (unpublished) can be found in \cite{D-V}, and a
nice proof that highlights the conceptual basis for the result can be found in Greither-Hayes \cite{G-H} (also outlined
in \cite{D-V}).

Using this result we obtain the following theorem.

\begin{theorem} \label{theorem:4rank}
Suppose $K$ is a number field with $\rank ( \EKplus / E_K^2) >  \lfloor n /2 \rfloor$ 
(equivalently,  $\rho_\infty > \lfloor r_1 / 2 \rfloor $). Then

\begin{enumalph}

\item
the sequence $ 0 \rightarrow K^* / E_K K^{*+} \rightarrow C_K^+ \rightarrow C_K \rightarrow 0  $ 
does not split, and 

\item
the 4-rank of the strict class group is at least
$\rank ( \EKplus / E_K^2) -  \lfloor n /2 \rfloor$.

\end{enumalph}
In particular, if $K$ is a totally real number field of degree $n$ with a totally positive system of fundamental
units, then the strict class group of $K$ contains at least
$(n-1)/2$ ($n$ odd) or $n/2 -1$ ($n$ even) independent elements of order 4.

\end{theorem}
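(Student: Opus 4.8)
The plan is to deduce all three assertions directly from Proposition~\ref{prop:4rank}, the Armitage--Fr\"ohlich inequality $\rho^+ - \rho \le \lfloor r_1/2 \rfloor$, and the rank relations established above, together with one elementary observation. Since $n = r_1 + 2 r_2$, we have $\lfloor n/2 \rfloor = r_2 + \lfloor r_1/2 \rfloor$; combined with the relation $\rank(\EKplus/E_K^2) = r_2 + \rho_\infty$ this already yields the asserted equivalence of the two forms of the hypothesis, $\rank(\EKplus/E_K^2) > \lfloor n/2 \rfloor \iff \rho_\infty > \lfloor r_1/2 \rfloor$, and it also identifies the bound in (b): $\rank(\EKplus/E_K^2) - \lfloor n/2 \rfloor = \rho_\infty - \lfloor r_1/2 \rfloor$.

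For (a): by Proposition~\ref{prop:4rank} the sequence \eqref{eq:fundseq2} splits if and only if $\rho^+ = \rho_\infty + \rho$, i.e.\ if and only if $(\rho_\infty + \rho) - \rho^+ = 0$. But Armitage--Fr\"ohlich gives $(\rho_\infty + \rho) - \rho^+ = \rho_\infty - (\rho^+ - \rho) \ge \rho_\infty - \lfloor r_1/2 \rfloor$, and the hypothesis in its second form says the right-hand side is strictly positive; hence the sequence does not split. For (b): Proposition~\ref{prop:4rank} bounds the 4-rank of $C_K^+$ below by $(\rho_\infty + \rho) - \rho^+$, and the same estimate bounds this below by $\rho_\infty - \lfloor r_1/2 \rfloor = \rank(\EKplus/E_K^2) - \lfloor n/2 \rfloor$, which is the claim. (This lower bound is automatically nonnegative, so (b) is in fact valid with or without the hypothesis; the hypothesis merely ensures it is not vacuous.)

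For the concluding assertion, suppose $K$ is totally real of degree $n$ with a totally positive system of fundamental units $\epsilon_1, \dots, \epsilon_{n-1}$, so $r_1 = n$ and $r_2 = 0$. Then $E_K = \{\pm 1\} \times \langle \epsilon_1, \dots, \epsilon_{n-1}\rangle$, and since $-1$ is totally negative while every $\epsilon_i$, and hence every product of the $\epsilon_i$, is totally positive, we get $\EKplus = \langle \epsilon_1, \dots, \epsilon_{n-1}\rangle$ and $E_K^2 = \langle \epsilon_1^2, \dots, \epsilon_{n-1}^2\rangle$; thus $\EKplus/E_K^2 \cong (\ZZ/2\ZZ)^{n-1}$ and $\rank(\EKplus/E_K^2) = n-1$. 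Feeding this into (b) gives a 4-rank of at least $n - 1 - \lfloor n/2 \rfloor$, which is $(n-1)/2$ for $n$ odd and $n/2 - 1$ for $n$ even; for $n \le 2$ the bound is $\le 0$ and the statement is vacuous, so no separate treatment is needed.

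I do not expect a genuine obstacle: once the Armitage--Fr\"ohlich inequality is granted, the argument is bookkeeping with the rank relations and the identity $\lfloor n/2 \rfloor = r_2 + \lfloor r_1/2 \rfloor$. The only step requiring a little care is the computation of $\rank(\EKplus/E_K^2)$ in the totally real case, where one must use that $-1 \notin \EKplus$ in order to see that $\EKplus$ is freely generated by the fundamental units and hence that its quotient by squares has rank exactly $n-1$.
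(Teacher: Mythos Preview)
Your proof is correct and follows essentially the same route as the paper: apply Armitage--Fr\"ohlich to bound $(\rho_\infty + \rho) - \rho^+$ below by $\rho_\infty - \lfloor r_1/2\rfloor$, invoke Proposition~\ref{prop:4rank}, and translate via $\rank(\EKplus/E_K^2) = r_2 + \rho_\infty$ and $\lfloor n/2\rfloor = r_2 + \lfloor r_1/2\rfloor$. You are simply more explicit than the paper in justifying the identity $\lfloor n/2\rfloor = r_2 + \lfloor r_1/2\rfloor$ and in computing $\rank(\EKplus/E_K^2) = n-1$ in the totally real case.
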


\begin{proof}
By the Armitage-Fr\"ohlich theorem, $ \rho^+ - \rho \le \lfloor r_1 / 2 \rfloor $, so
$(\rho_\infty + \rho ) - \rho^+ \ge \rho_\infty - \lfloor r_1 /2 \rfloor$ and (a) and (b) follow from 
Proposition \ref{prop:4rank} since $\rho_\infty - \lfloor r_1 /2 \rfloor = \rank (\EKplus / E_K^2) - \lfloor n/2 \rfloor $.
If $K$ is totally real with a totally positive system of fundamental units then
$\rank (\EKplus / E_K^2) = n-1$, which gives the final statement in the Theorem.  
\end{proof}

\begin{remark}
When $K$ is a real quadratic field, the splitting behavior of \eqref{eq:fundseq} is well-known (cf.\ \cite{Ha})
and follows from the result that
$ \rank (C_K^+) = 1 + \rank (C_K)$ if and only if there is no element $\omega \in K$ with $\norm{\omega} = -1$:
the sequence \eqref{eq:fundseq} splits if and only if either
(a) there is a unit $\epsilon$ with $\norm{\epsilon} = - 1$, in which case $P_K/P_K^+ = 1$, or
(b) there is no element $\omega \in K^*$ with $\norm{\omega} = -1$, in which case $P_K/P_K^+ = \ZZ / 2\ZZ$ and
$C_K^+ \cong C_K \oplus \ZZ / 2\ZZ$; in 
the remaining case (where there is an element $\omega \in K^*$ with $\norm{\omega} = -1$ but
no {\it unit} with the property), then $\order{C_K^+} = 2 \order{C_K}$ but the groups have the same 2-rank.

We briefly recall the proof of the rank result from \cite{Ha}.  Let $I_K$ denote the 
group of nonzero fractional ideals of $K$, let $G$ be the subgroup of ideals $\A$ whose norm agrees, up to sign,
with the norm of some element $\alpha$ of $K^*$: $\norm{\A} = \pm \norm{\alpha}$, and let
$G^+$ be the subgroup of those $\A$ whose norm agrees with the (necessarily positive) norm of
some element $\alpha$ of $K$:  $\norm{\A} =  \norm{\alpha}$.  
If $\tau$ is the nontrivial automorphism of $K$, then $I_K^{1- \tau} \subseteq G^+$
since $\norm{\A/\A^\tau} = 1$. Also, if $\A \in G^+$, then $\norm{\A} =  \norm{\alpha}$ for some 
$\alpha \in K^*$, and since $ \norm{\alpha} > 0$, the principal ideal $(\alpha)$ lies in $P_K^+$. Then
$\norm{ \A/(\alpha) } = 1$, so the prime ideal decomposition of $\A/(\alpha)$ is a product of ideals of the form 
$ {\mathfrak p}/{\mathfrak p}^\tau $ where $ {\mathfrak p} $ is a split prime in $K$.  Hence $\A \in  I_K^{1- \tau} \ P_K^+ $, 
so $G^+ = I_K^{1- \tau} \ P_K^+ $ and it follows that $G^+/P_K^+ = (C_K^+)^2$ since $\tau$ acts by inversion 
on $C_K^+$.  The same argument shows $G/P_K = C_K^2$ (the``Principal Genus Theorem'').
It follows that $I_K/G = (I_K/P_K)/(G/P_K) = C_K / C_K^2$ and similarly 
$I_K/G^+ = C_K^+ /(C_K^+)^2$, so 
$[I_K: G] = 2^{\rank ( C_K )}$ and  $[I_K: G^+] = 2^{\rank ( C_K^+ )}$.
Since $[G:G^+] = 1$ or 2, with $[G:G^+] = 1$ if and only if there is an element
$\omega \in K$ with $\norm{\omega} = -1$, the rank result follows.  

\end{remark}

\begin{remark}
It is perhaps worth noting that the hypotheses of Proposition \ref{prop:prop1}  and Theorem \ref{theorem:4rank}
most likely do not hold for cyclotomic fields, since the signature rank of the units in such
extensions is expected to be nearly maximal.  This is known to be true, for example, for the
cyclotomic fields of prime power conductor $p^n$ for $n$ sufficiently large (\cite{D-D-K}).
\end{remark}

\section{Subfields of the Hilbert class field: unramified extensions of unit type.}

As previously noted, in \cite{A-F} Armitage and Fr\"ohlich explicitly give the inequality 
$\rho \ge \rho_\infty - \lfloor r_1/2 \rfloor $.  
Theorem \ref{theorem:4rank} shows there are $ \rho_\infty - \lfloor r_1/2 \rfloor = \rank (E_K^+ / E_K^2) - \lfloor n/2 \rfloor $
elements of order 4 in the strict class group, and these survive passing to the quotient $C_K$, which `explains' the 
Armitage-Fr\"ohlich result.  In response to a question/conjecture of the author (based on
computations in the case of totally real cubic fields), Hayes (\cite{H}) and then Greither and Hayes (\cite{G-H}) 
proved that a subgroup of 2-rank at least $ \rho_\infty - \lfloor r_1/2 \rfloor $, precisely the contribution to the
class group guaranteed by the explicit theorem of 
Armitage and Fr\"ohlich, is accounted for by totally unramified quadratic 
extensions $K(\sqrt{\epsilon})$ for units $\epsilon \in E_K$ (cf. also \cite{D-V} for a proof of this result).    

The result in Theorem \ref{theorem:4rank}, showing that a large number of totally positive units modulo squares implies the
existence of a number of cyclic quartic extensions of $K$ that are unramified at all 
finite primes, then raises the analogous question of whether one can similarly explicitly 
describe a number of cyclic quartic extensions
from these units.

With an eye to addressing this question, we make some remarks about the ``unramified extensions of unit type'' given
by $K(\sqrt{\epsilon})$ for units $\epsilon \in E_K$.

Let $\OO_K$ denote the ring of integers of $K$, let $\OO_{K,(2)}$ denote its localization at 2, and for every place $v$ of $K$, let 
$\OO_v$ denote the ring of integers in the completion $K_v$ of $K$ at $v$.  

\begin{lemma}  \label{lem:squaremod4}
Suppose $\epsilon$ is a unit in $K$.   Then the following are equivalent:

\begin{enumalph}

\item
There are integers $\alpha, \beta$ in $\OO_K$ with $\epsilon = \alpha^2 + 4 \beta$.

\item
There are 2-integral elements $\alpha_{(2)}, \beta_{(2)}$ in $\OO_{K,(2)}$ with $\epsilon = \alpha_{(2)}^2 + 4 \beta_{(2)}$.
Equivalently, $\epsilon = \alpha_{(2)}^2 (1 + 4 \beta_{(2)})$ for some 2-integral elements $\alpha_{(2)}, \beta_{(2)} \in \OO_{K,(2)}$.

\item
For every place $v$ dividing 2, there are elements $\alpha_v, \beta_v$ in $\OO_v$ with $\epsilon = \alpha_v^2 + 4 \beta_v$.
Equivalently, $\epsilon = \alpha_v^2 (1 + 4 \beta_v)$ for some $\alpha_v, \beta_v$ in $\OO_v$.

\end{enumalph}

\end{lemma}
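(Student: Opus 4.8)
The plan is to establish the equivalence (a) $\Leftrightarrow$ (b) $\Leftrightarrow$ (c) by a combination of a global-to-local passage (a local–global principle for the existence of a square root modulo a power of $2$) and the observation that all three conditions are really statements about $\epsilon$ in the semilocal ring $\OO_{K,(2)}$, which decomposes as a product of the complete local rings $\OO_v$ for $v \mid 2$. The implication (a) $\Rightarrow$ (b) is immediate, since $\OO_K \subseteq \OO_{K,(2)}$. The implication (b) $\Rightarrow$ (c) is also immediate, since $\OO_{K,(2)} \subseteq \OO_v$ for each $v \mid 2$ (completion being faithfully flat, or just directly: a global equation remains valid after completing).

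**Next I would** handle the two genuinely substantive directions. For (c) $\Rightarrow$ (b): the semilocal ring $\OO_{K,(2)}$ is the intersection $\bigcap_{v \mid 2} \OO_v$ inside $K$, but more usefully its $2$-adic completion is $\prod_{v \mid 2} \OO_v$; an element $\alpha_{(2)} \in \OO_{K,(2)}$ with a prescribed reduction modulo a high power of $2$ in each $\OO_v$ exists by weak approximation together with the fact that $\OO_{K,(2)}$ is dense in $\prod_{v \mid 2} \OO_v$. Concretely, choosing $\alpha_v$ as in (c) and then picking $\alpha_{(2)} \in \OO_{K,(2)}$ congruent to each $\alpha_v$ modulo $4\OO_v$ (which is possible since $\OO_{K,(2)}/4\OO_{K,(2)} \cong \prod_{v\mid 2} \OO_v/4\OO_v$), one gets $\epsilon - \alpha_{(2)}^2 \in 4\OO_v$ for every $v \mid 2$, hence $\epsilon - \alpha_{(2)}^2 \in 4\OO_{K,(2)}$, giving the desired $\beta_{(2)}$. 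The equivalence with the multiplicative form $\epsilon = \alpha_{(2)}^2(1+4\beta_{(2)})$ follows because $\alpha_{(2)}$ can be taken to be a $2$-adic unit: $\epsilon$ is a unit, so at each $v \mid 2$ the element $\alpha_v^2 \equiv \epsilon \pmod{4}$ forces $\alpha_v$ to be a unit, and then $\epsilon/\alpha_{(2)}^2$ is $2$-integral and $\equiv 1 \pmod 4$.

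**The main obstacle** — and the one step deserving care — is (b) $\Rightarrow$ (a), i.e.\ clearing denominators: given $\alpha_{(2)}, \beta_{(2)} \in \OO_{K,(2)}$ with $\epsilon = \alpha_{(2)}^2 + 4\beta_{(2)}$, one must produce genuinely \emph{integral} $\alpha, \beta \in \OO_K$. The point is that $\alpha_{(2)}$ and $\beta_{(2)}$ are only $2$-integral, so their denominators involve primes away from $2$. The strategy is to modify $\alpha_{(2)}$ by an element of $\OO_K$ that is $2$-adically small but kills the prime-to-$2$ denominator: by strong approximation (or the Chinese Remainder Theorem applied to the finitely many primes in the denominator of $\alpha_{(2)}$ together with the primes above $2$), choose $\alpha \in \OO_K$ with $\alpha \equiv \alpha_{(2)} \pmod{4\OO_{K,(2)}}$ and $\alpha$ integral at every prime not above $2$. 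Then $\epsilon - \alpha^2 \equiv \epsilon - \alpha_{(2)}^2 \equiv 0 \pmod{4}$ in $\OO_{K,(2)}$, so $\beta \colonequals (\epsilon - \alpha^2)/4$ lies in $\OO_{K,(2)}$; and since $\epsilon$ and $\alpha^2$ are both in $\OO_K$ (integral everywhere), $\beta \in \OO_K \cap \OO_{K,(2)} \cdot \tfrac14$ is integral away from $2$ as well, hence $\beta \in \OO_K$. This gives (a).

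**Finally**, I would remark that the whole lemma is an instance of the general principle that a unit being a ``square mod $4$'' is a purely local condition at the primes above $2$, and record that the equivalent multiplicative reformulations are included because they are the forms used in the sequel (e.g.\ in deciding whether $K(\sqrt\epsilon)/K$ is unramified at $2$, one needs exactly the condition $\epsilon \equiv \square \pmod{4}$). The proof uses nothing beyond the Chinese Remainder Theorem / approximation for Dedekind domains and the identification $\OO_{K,(2)}/4 \cong \prod_{v\mid 2}\OO_v/4$, so no machinery from earlier in the paper is needed; the only care required is the bookkeeping of denominators in the step (b) $\Rightarrow$ (a).
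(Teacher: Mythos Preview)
Your proof is correct and follows essentially the same approach as the paper: approximate the local (or semilocal) square roots by a global integer $\alpha \in \OO_K$ congruent to them modulo $4$ at all primes above $2$, and then observe that $\beta = (\epsilon - \alpha^2)/4$ is integral at every place. The only cosmetic difference is that the paper passes directly from (c) to (a) in one step, whereas you factor through (b); the underlying argument and the verification of the additive/multiplicative equivalences are the same.
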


\begin{proof}
The two statements in (c) are equivalent since $\epsilon = \alpha_v^2 + 4 \beta_v$ implies $\alpha_v$ is locally a unit, so
$\epsilon = \alpha_v^2 (1 + 4 \beta_v/\alpha_v^2)$ and $\beta_v/\alpha_v^2 \in \OO_v$.  Similarly, the two statements in (b) are equivalent. 
Clearly (a) implies (b) implies (c), so it remains to show (c) implies (a).  Suppose for every $v$ dividing 2 there are
elements $\alpha_v, \beta_v$ in $\OO_v$ so that $\epsilon = \alpha_v^2 + 4 \beta_v$.  Choose an integer $\alpha \in \OO_K$ so that for
every $v$ dividing 2, $\alpha \equiv \alpha_v$ mod 4 locally at $v$.  The element $\beta = (\epsilon - \alpha^2)/4$ 
in $K$ is clearly integral at every place of $K$ not dividing 2.  
Since $\beta \in \beta_v + \OO_v  = \OO_v$ for places $v$ dividing 2, $\beta$ is also integral at 2, hence $\beta \in \OO_K$,
so $\epsilon = \alpha^2 + 4 \beta$ with $\alpha, \beta$ in $\OO_K$.
\end{proof}

\begin{definition*}
Let $\EK4$ denote the units in $E_K$ satisfying any of the equivalent conditions in Lemma \ref{lem:squaremod4}, referred to 
as `the units of $K$ that are squares mod 4'.
\end{definition*}

The units $\epsilon$ of $\EK4$ are precisely the units of $K$ such that the extension $K(\sqrt{\epsilon})$ is unramified
over $K$ at all finite primes (see, for example, \cite[Proposition 4.8]{D-V}). 

\begin{remark}
The results in Lemma \ref{lem:squaremod4} show the questions of whether a unit $\epsilon$ is `a square mod 4' in an additive sense
(i.e., $\epsilon = \alpha^2 + 4 \beta$) globally, globally at 2, and locally at primes over 2, are
all equivalent.  By (b) and (c) of the lemma, 
the questions of whether a unit $\epsilon$ is `a square mod 4' in a multiplicative sense
(i.e., $\epsilon = \alpha^2(1 + 4 \beta)$)  globally at 2 and locally at primes over 2 are also
equivalent (and equivalent to the additive version).   
The first example following Theorem \ref{thm:quadfields} below
(see Remark \ref{rem:notglobal}) shows that for $\epsilon \in \EK4$, it may not be possible to 
express $\epsilon$ globally in the form $\alpha^2 (1 + 4 \beta)$ where $\alpha$ and $\beta$ are in $\OO_K$, 
so the conditions in the lemma are not equivalent to this global multiplicative version.  As a result,
some mild care should be exercised with the phrase `$\epsilon$ is a square mod 4'. 
\end{remark}

As motivation for the next definition, recall that a quadratic
extension $K(\sqrt a)$ can be embedded in a cyclic quartic extension of $K$ if and only if $a$ is a sum of two
squares in $K$ (cf.\ for example Exercise 19, Section 14.6 in \cite{D-F}).

\begin{definition*}
Let $\EKsquares$ denote the units in $E_K$ that are the sum of two squares (in $K$):  $\epsilon = \alpha^2 + \beta^2$, 
$\alpha, \beta \in K$. 
\end{definition*}

Each of $\EK4$ and $\EKsquares$ is a canonical subgroup of $E_K$ containing the squares $E_K^2$.

The first part of the following theorem is the result of Hayes mentioned previously.  

\begin{theorem}  \label{thm:unitthm}
With notation as above, we have:

\begin{enumalph}

\item
(Hayes, Greither-Hayes) $\rank (\EK4 \cap \EKplus ) \ge  \rho_\infty - \lfloor r_1/2 \rfloor $. 
Equivalently, there 
are at least $ \rank (E_K^+ / E_K^2) - \lfloor n/2 \rfloor $ independent unramified quadratic extensions
$K(\sqrt{\epsilon})$ of unit type.  

\item
The unit $\epsilon \in \EKplus$ is a sum of two rational squares in $K$, i.e.,  $\epsilon \in \EKsquares$,
if and only if the quadratic Hilbert symbol $(\epsilon, -1)_v$ is 1 at all primes $v$
dividing 2 in $K$ (equivalently, $(\epsilon, \epsilon)_v = 1$, i.e., $\epsilon$ is `isotropic', at all such $v$). 
In particular, every unit in $\EK4 \cap \EKplus$ is a sum of two squares:
$$
E_K^2 \subseteq \EK4 \cap \EKplus \subseteq \EKsquares \subseteq \EKplus \subseteq E_K .
$$

\item
If $g$ denotes the number of primes dividing 2 in $K$, then $\rank (\EKplus / \EKsquares ) \le g - 1 $.  In particular,
for a real quadratic field $K = \QQ( \sqrt {d})$ with squarefree integral $d > 0$ in which 
2 is either ramified or inert, a unit is totally positive if and only if it is a 
sum of two squares in $K$.

\end{enumalph}

\end{theorem}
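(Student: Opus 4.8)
The plan is to treat part (a) as the cited theorem of Hayes and Greither--Hayes, and to reduce (b) and (c) to the Hasse norm theorem for the quadratic extension $K(\sqrt{-1})/K$ together with a handful of standard local facts about quadratic Hilbert symbols. For (a), Kummer theory identifies the quotient $(\EK4 \cap \EKplus)/E_K^2$ with the $\F_2$-space of quadratic extensions $K(\sqrt{\epsilon})$ that are unramified at every place: at the finite places exactly because $\epsilon \in \EK4$ (the characterization recalled after Lemma~\ref{lem:squaremod4}), and at the archimedean places exactly because $\epsilon$ is totally positive. Hence the two formulations in (a) say the same thing, and the asserted inequality is precisely the result proved in \cite{H} and refined in \cite{G-H} (see also \cite{D-V}); nothing further is needed here.

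For (b) I would begin from the classical equivalence that $\epsilon \in K^*$ is a sum of two squares in $K$ if and only if it is a norm from $K(\sqrt{-1})/K$, since $N_{K(\sqrt{-1})/K}(x + y\sqrt{-1}) = x^2 + y^2$; the degenerate case $\sqrt{-1} \in K$ (which forces $r_1 = 0$, so $\EKplus = E_K$ and every element of $K$ is visibly a sum of two squares) is dispatched separately and trivially. Since $K(\sqrt{-1})/K$ has degree at most $2$ it is cyclic, so the Hasse norm theorem gives: $\epsilon$ is a sum of two squares if and only if $(\epsilon, -1)_v = 1$ at every place $v$. The heart of the matter is then that for $\epsilon \in \EKplus$ all conditions away from $2$ are automatic: at a real place $(\epsilon, -1)_v = 1$ because $\epsilon$ is positive there; at a complex place it is trivial; and at a finite $v \nmid 2$ it holds because $\epsilon$ and $-1$ are both units and the Hilbert symbol of two units over a non-dyadic local field is trivial (the extension $K_v(\sqrt{-1})/K_v$ being unramified). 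This gives the stated equivalence, and the reformulation via $(\epsilon, \epsilon)_v$ follows from the identity $(a, a)_v = (a, -1)_v$. For the ``in particular'' clause: if $\epsilon \in \EK4 \cap \EKplus$, Lemma~\ref{lem:squaremod4}(c) writes $\epsilon = \alpha_v^2(1 + 4\beta_v)$ locally at each $v \mid 2$, so bilinearity gives $(\epsilon, -1)_v = (1 + 4\beta_v, -1)_v$, which I claim equals $1$ because $1 + 4\OO_v$ lies in the norm group of $K_v(\sqrt{-1})/K_v$; the displayed chain of inclusions then follows at once, since a nonzero sum of two squares is totally positive (so $\EKsquares \subseteq \EKplus$) and $E_K^2 \subseteq \EK4 \cap \EKplus$ trivially.

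For (c), the assignment $\epsilon \mapsto \big( (\epsilon, -1)_v \big)_{v \mid 2}$ is a homomorphism from $\EKplus$ to $\prod_{v \mid 2}\{\pm 1\}$ whose kernel is $\EKsquares$ by (b), so $\rank(\EKplus/\EKsquares) \le g$. To sharpen the bound to $g - 1$ I would invoke Hilbert reciprocity, $\prod_{v}(\epsilon, -1)_v = 1$, together with the vanishing of $(\epsilon, -1)_v$ at every $v \nmid 2$ established in the proof of (b): this forces $\prod_{v \mid 2}(\epsilon, -1)_v = 1$, so the image lies in the index-$2$ subgroup of $\prod_{v \mid 2}\{\pm 1\}$ on which the product is trivial, a group of rank $g - 1$. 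The real quadratic assertion is then immediate: if $2$ is ramified or inert in $\QQ(\sqrt{d})$ then $g = 1$, so $\rank(\EKplus/\EKsquares) = 0$, and since $\EKsquares \subseteq \EKplus$ always, this forces $\EKsquares = \EKplus$.

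The one place where genuine work is hidden is the local input used in the ``in particular'' part of (b): that $1 + 4\OO_v \subseteq N_{K_v(\sqrt{-1})/K_v}\big(K_v(\sqrt{-1})^*\big)$ for $v \mid 2$, equivalently $(1 + 4\beta, -1)_v = 1$ for all $\beta \in \OO_v$. This is the statement that the conductor of $K_v(\sqrt{-1})/K_v$ (when the extension is nontrivial) divides $4\OO_v = \pp_v^{2e_v}$, where $e_v = e(K_v/\QQ_2)$; since $-1 = 1 - 2$ and $v(2) = e_v$, the conductor exponent is in fact at most $e_v + 1 \le 2e_v$, which is the standard bound for a quadratic extension of a dyadic field obtained by adjoining the square root of a unit. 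It is classical, but it is the one point at which a genuine $2$-adic computation intervenes; everything else in (b) and (c) is formal once the Hasse norm theorem, Hilbert reciprocity, and the tameness of Hilbert symbols at odd primes are in hand.
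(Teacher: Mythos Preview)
Your argument is correct and follows the same overall architecture as the paper's proof: cite Hayes/Greither--Hayes for (a); use the Hasse norm theorem for $K(\sqrt{-1})/K$ together with triviality of $(\epsilon,-1)_v$ at archimedean and odd places to get the main equivalence in (b); and use the Hilbert-symbol map to $\{\pm 1\}^g$ with the product formula for (c).

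The one genuine difference is in the ``in particular'' clause of (b). You reduce $(\epsilon,-1)_v$ for $v\mid 2$ to $(1+4\beta_v,-1)_v$ via the local multiplicative form $\epsilon=\alpha_v^2(1+4\beta_v)$, and then argue that $1+4\OO_v$ lies in the norm group of $K_v(i)/K_v$ by bounding the conductor exponent of $K_v(\sqrt{-1})/K_v$ by $e_v+1\le 2e_v$ (using $-1\in 1+\pp_v^{e_v}$). This is correct, but it is the hardest-working step in your write-up and the justification you give (``the standard bound for a quadratic extension of a dyadic field obtained by adjoining the square root of a unit'') is a bit loosely phrased: the \emph{generic} bound for $K_v(\sqrt{u})$, $u$ a unit, is $2e_v+1$, and you are really invoking the sharper bound $2e_v-m+1$ for $u\in U^{(m)}$, applied with $m=e_v$. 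The paper avoids this computation entirely by exploiting the symmetry of the Hilbert symbol: $(\epsilon,-1)_v=1$ if and only if $-1$ is a norm from $K_v(\sqrt{\epsilon})$, and for $\epsilon\in\EK4$ this extension is \emph{unramified} at $v\mid 2$ by the very definition of $\EK4$, so the unit $-1$ is automatically a local norm---exactly the same mechanism already used at the odd places. That one-line argument replaces your conductor estimate (and, amusingly, would also give a clean proof of your claim that $(1+4\beta,-1)_v=1$, since $K_v(\sqrt{1+4\beta})/K_v$ is unramified). Your route has the virtue of being explicit about the $2$-adic input, while the paper's route makes clear that no new local computation is needed beyond what was already used for odd primes.
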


\begin{proof}
For (a) see \cite{D-V}.

For (b): the unit $\epsilon$ is a sum of two squares, $\epsilon = x^2 + y^2$, if and only if $\epsilon$ is a norm
from the extension $K(i)$ (note that if $i \in K$ then every element $\alpha$, in particular $\epsilon$, 
in $K$ is a sum of two squares: take $x = (\alpha + 1)/2$ and $y = i (\alpha - 1)/2$). 
Hence $\epsilon$ is a sum of two squares if and only if the global 
quadratic Hilbert symbol $(\epsilon, -1)$ is 1, which is the case if and only if each local symbol is 1. 
The symbol $(\epsilon, -1)_v$ is 1 at all infinite places $v$ since $\epsilon \in \EKplus$
is totally positive, and $(\epsilon, -1)_v = 1$ at all odd places $v$ since $K(\sqrt{\epsilon})$ is then
unramified at $v$ and all units are locally norms.  This proves the first statement in (b) (noting that
since $(\epsilon, -\epsilon)$ is always 1, $(\epsilon, -1)_v = 1$ is equivalent to $(\epsilon, \epsilon)_v = 1$). 
For $\epsilon \in \EK4 \cap \EKplus$, the extension $K(\sqrt{\epsilon})$ is also unramified at primes dividing 2,
so just as for odd primes, $(\epsilon, -1)_v = 1$ at all the even places $v$, so every unit in
$\EK4 \cap \EKplus$ is the sum of two squares (in $K$).  

For (c): if $g$ is the number of primes dividing 2, the map 
\begin{align*}
\EKplus & \rightarrow \prod_{v \mid 2} ( \pm 1 ) \ \iso \ ( \pm 1)^g \\
\epsilon & \mapsto (\dots, (\epsilon, -1)_v , \dots ) ,
\end{align*}
defined by the quadratic Hilbert norm residue symbols, is a homomorphism with kernel $\EKsquares$ by (b). The image 
lies in the subgroup of codimension 1 consisting of elements whose product is $+1$ since 
$\prod_{v \mid 2} (\epsilon, -1)_v  = 1$
by the product formula, so $\rank (\EKplus / \EKsquares ) \le g - 1$.
\end{proof}

The ranks of the index relations for these subgroups of the group of units are summarized in the
following diagram. 
$$ 
\underbrace{
\underbrace{
\underbrace{E_K^2 \subseteq \EK4 \cap}_{\ge \rho_\infty- \lfloor r_1/2 \rfloor} 
 \EKplus \subseteq \EKsquares  \underset {\le g - 1} {\subseteq}  }_{= r_2 + \rho_\infty}
 \overbrace{\EKplus \subseteq  E_K  }^{= r_1 - \rho_\infty \ge 1}
}_{= r_1 + r_2}.
$$

\subsection*{Example: Real quadratic fields}

Suppose $K = \QQ( \sqrt {d})$ with squarefree integral $d > 0$.
Then $\rank ( E_K / E_K^2 ) = 2$ and $\rank (E_K / \EKplus) \ge 1$.
If a fundamental unit $\epsilon_K$  has negative
norm (the situation when all possible signatures of units occur), then $\EKplus = E_K^2$, so 
$E_K^2 = \EK4 \cap \EKplus = \EKsquares = \EKplus$ and $\rank (E_K / \EKplus) = 2$, so (b) of Proposition 3 becomes
$$
E_K^2 =  \EK4 \cap \EKplus = \EKsquares 
= \EKplus \underset {4} {\subseteq} E_K ,
$$
where the underscript 4 indicates the index of the subgroup.

Assume for the remainder of the example that the fundamental unit $\epsilon_K$ is totally positive and has been
normalized so that $\epsilon_K > 1$ with respect to the
embedding for which $\sqrt d > 0$.  Then $\norm{\epsilon_K} = +1$ and $\EKplus$ has index 2 in $E_K$:
\begin{equation} \label{eq:unitcontainments}
E_K^2 \subseteq  \EK4 \cap \EKplus \subseteq \EKsquares 
\subseteq \EKplus \underset {2} {\subseteq} E_K .
\end{equation}

By Hilbert's Theorem 90, $\norm{\epsilon_K} = +1$ implies
\begin{equation} \label{eq:alphadef}
\epsilon_K = \dfrac{ \sigma (\alpha) }{\alpha}
\end{equation}
for some $\alpha \in \QQ(\sqrt d)$, which may be assumed to be an algebraic integer (for example, take
$\alpha = \sigma (\epsilon_K) + 1$).   
Since $\sigma (\alpha) = (\alpha)$, the principal ideal $(\alpha)$ is invariant under $\sigma$ (i.e., is
an ambiguous ideal), so the element $\alpha$ in \eqref{eq:alphadef} can be further chosen
so that the ideal $(\alpha)$ is the product of
distinct ramified primes without altering \eqref{eq:alphadef}.  For such a choice of $\alpha$, 
let $m$ denote the norm of $\alpha$:
\begin{equation} \label{eq:mdef}
m = \alpha \ \sigma (\alpha) .
\end{equation}
Then $m$ is a squarefree integer dividing the discriminant of $K = \QQ(\sqrt d)$.

From \eqref{eq:alphadef} we have
\begin{equation} \label{eq:meps}
m \ \epsilon_K = \alpha \ \sigma (\alpha) \dfrac{ \sigma (\alpha) }{\alpha} = \sigma (\alpha)^2
\end{equation}
so 
$m \epsilon_K$ is a square in $K^*$.  We have
\begin{equation} \label{eq:epsplus1}
\epsilon_K + 1 = \dfrac{ \sigma (\alpha) }{\alpha} + 1 = \dfrac{ \alpha + \sigma (\alpha) }{\alpha},
\end{equation}
so
\begin{equation} \label{eq:Normepsplus1}
\norm{\epsilon_K + 1} = \dfrac{ (\alpha + \sigma (\alpha))^2 }{m},
\end{equation}
where $\alpha + \sigma (\alpha) \in \ZZ$.  Hence $m \ \norm{\epsilon_K + 1}$ is a square in $\ZZ$ and it follows that
$m$ is the squarefree part of $\norm{\epsilon_K + 1}$ and that
$m > 1$ since $\epsilon_K + 1 > 1$ with respect to both embeddings of $K$.

We summarize this in the following proposition.

\begin{proposition} \label{prop:m}
Suppose $\norm{\epsilon_K} = +1$ in $\QQ(\sqrt d)$ as above, and let
$m$ denote the squarefree part of the positive integer $\norm{\epsilon_K + 1}$. Then $m > 1$, 
$m$ divides the discriminant of $\QQ(\sqrt d)$, and
$m \epsilon_K$ is a square in $\QQ(\sqrt{d})$.
\end{proposition}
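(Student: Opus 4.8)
The plan is to combine Hilbert's Theorem 90 with the structure of ambiguous ideals in $K = \QQ(\sqrt d)$, organizing the computation \eqref{eq:alphadef}--\eqref{eq:Normepsplus1} sketched above into an actual proof.

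Since $\norm{\epsilon_K} = +1$, Hilbert 90 yields $\alpha \in K^*$ with $\epsilon_K = \sigma(\alpha)/\alpha$ as in \eqref{eq:alphadef}, and one may take $\alpha = \sigma(\epsilon_K)+1$, a nonzero algebraic integer because $\sigma(\epsilon_K) = 1/\epsilon_K > 0$. As $\sigma(\alpha)/\alpha$ is a unit, $(\alpha)$ is ambiguous; the structural input is that an ambiguous ideal of a quadratic field has the form $(c)\prod_{i\in S}\mathfrak p_i$ with $c$ a positive integer and $S$ a set of ramified primes (since $\mathfrak p^2 = (p)$ for $\mathfrak p$ ramified, while the contribution of each split or inert prime is already rational). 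Replacing $\alpha$ by $\alpha/c$ — which is still an algebraic integer, because $(\alpha/c) = \prod_{i\in S}\mathfrak p_i$ is integral, and still satisfies \eqref{eq:alphadef}, because $c \in \QQ^*$ — we may assume $(\alpha) = \prod_{i\in S}\mathfrak p_i$.

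Now put $m := \norm{\alpha} = \alpha\sigma(\alpha)$ as in \eqref{eq:mdef}. Under $\norm{\epsilon_K} = +1$ every unit has norm $+1$, and the passage from $\sigma(\epsilon_K)+1$ to the current $\alpha$ only divides $\norm{\alpha}$ by a positive square, so the sign of $m$ equals the sign of $\norm{\sigma(\epsilon_K)+1} = \mathrm{Tr}_{K/\QQ}(\epsilon_K)+2 > 0$; hence $m = \prod_{i\in S}p_i$ is a positive squarefree integer dividing the product of the ramified primes, and so divides the discriminant of $K$. Multiplying $\epsilon_K = \sigma(\alpha)/\alpha$ by $m = \alpha\sigma(\alpha)$ gives $m\epsilon_K = \sigma(\alpha)^2$ as in \eqref{eq:meps}, a square in $K^*$. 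For the identification with the squarefree part of $\norm{\epsilon_K+1}$, write $\epsilon_K+1 = (\alpha+\sigma(\alpha))/\alpha$ as in \eqref{eq:epsplus1} and take norms; since $\alpha+\sigma(\alpha) = \mathrm{Tr}_{K/\QQ}(\alpha)\in\ZZ$, this yields $m\,\norm{\epsilon_K+1} = (\alpha+\sigma(\alpha))^2$ as in \eqref{eq:Normepsplus1}, a perfect square in $\ZZ$, so the positive squarefree integer $m$ is precisely the squarefree part of the positive integer $\norm{\epsilon_K+1}$. Finally $m > 1$: if $m = 1$ then $\epsilon_K = \sigma(\alpha)^2$ would be a square in $K$, but a square root of a unit is again a unit, so equals $\pm\epsilon_K^k$ for some $k$, forcing $\epsilon_K = \epsilon_K^{2k}$, which is impossible as $\epsilon_K$ has infinite order.

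The step I expect to be the main obstacle is the normalization of $\alpha$: verifying it can be chosen so that $(\alpha)$ is a squarefree product of ramified primes while preserving both $\sigma(\alpha)/\alpha = \epsilon_K$ and integrality of $\alpha$, and carrying along enough sign information to conclude $m = \norm{\alpha} > 0$. After that, the divisibility of $m$ into the discriminant, the fact that $m\epsilon_K$ is a square, and the identification of $m$ are all short computations.
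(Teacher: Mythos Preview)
Your proof is correct and follows essentially the same route as the paper: Hilbert~90, normalization of $\alpha$ via the ambiguous-ideal structure, and the computations \eqref{eq:alphadef}--\eqref{eq:Normepsplus1}. The only notable difference is your argument for $m>1$: you observe that $m=1$ would make $\epsilon_K$ a square of a unit, contradicting fundamentality, whereas the paper appeals (somewhat tersely) to $\epsilon_K+1>1$ in both embeddings; your version is cleaner.
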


\begin{remark}
Since the discriminant $D$ of $\QQ(\sqrt d)$ is a square in $\QQ(\sqrt d)$, the integers $m$ and $D/m$ differ by a square in
$\QQ(\sqrt{d})$, so the squarefree part of $D/m$ times $\epsilon_K$ is also a square in $\QQ(\sqrt{d})$.  As above,
one can show that the squarefree part of $D/m$ is equal to the squarefree part of the positive integer 
$-\norm{\epsilon_K - 1}$.  Also, if $A$ is $1/2$ of the positive square root of (the positive square integer) $m \norm{\epsilon_K + 1}$ and 
$B$ is $1/2$ of the negative square root of 
(the positive square integer) $- m \norm{\epsilon_K - 1}/d$, then $\alpha$ in \eqref{eq:alphadef} can be taken to be $A + B \sqrt d$.

\end{remark}

The following theorem gives a complete classification of the possible indices in \eqref{eq:unitcontainments}
in terms of the integers $d$ and $m$.

\begin{theorem} \label{thm:quadfields}
Suppose $\epsilon_K$ in $K = \QQ(\sqrt d)$ has norm $+1$ and the positive integer $m$ is as in
Proposition \ref{prop:m}. Then

\begin{enumalph}
\item
$[\EK4 \cap \EKplus : E_K^2] = 2$ (i.e., $\epsilon_K$ is a square mod 4), and so
\begin{equation*}  \label{eq:example3doubleprime}
E_K^2 \underset {2}{\subseteq } \EK4 \cap \EKplus = \EKsquares = \EKplus \underset {2} {\subseteq} E_K ,
\end{equation*}
if and only if one of the following conditions is satisfied:
\begin{enumerate}
\item[{(i)}]
$d \equiv 1$ mod 4 and $m \equiv 1$ mod 4,
\item[{(ii)}]
$d \equiv 3$ mod 4 and $m$ is odd, 
\item[{(iii)}]
$d \equiv 2$ mod 4 and $m \equiv 1$ mod 4, 
\item[{(iv)}]
$d \equiv 2$ mod 8 and $m \equiv 2$ mod 8, or
\item[{(v)}]
$d \equiv 6$ mod 8 and $m \equiv 6$ mod 8,
\end{enumerate}

\smallskip

\item
$[\EKplus : \EKsquares \, ] = 2$ (i.e., $\epsilon_K$ is not the sum of two squares in $K$), and so
\begin{equation*}  \label{eq:example4prime}
 E_K^2 = \EK4 \cap \EKplus = \EKsquares \underset {2} {\subseteq} \EKplus \underset {2} {\subseteq} E_K ,
\end{equation*}
if and only if $d \equiv 1$ mod 8 and $m \equiv 3$ mod 4, and

\smallskip

\item
in all other cases $[\EKsquares : \EK4 \cap \EKplus] = 2$ (i.e., $\epsilon_K$ is the sum of two squares in $K$ but
is not a square mod 4), and so
\begin{equation*}  \label{eq:example3prime}
E_K^2 = \EK4 \cap \EKplus \underset {2} {\subseteq} \EKsquares = \EKplus \underset {2}{\subseteq} E_K .
\end{equation*}
\end{enumalph}

\end{theorem}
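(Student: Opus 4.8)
We are trying to classify the three possible index configurations in \eqref{eq:unitcontainments} in terms of congruence conditions on $d$ and $m$, where $m$ is the squarefree part of $\norm{\epsilon_K + 1}$ and, by Proposition~\ref{prop:m}, $m\epsilon_K$ is a square in $K$. The key structural input is the chain $E_K^2 \subseteq \EK4\cap\EKplus \subseteq \EKsquares \subseteq \EKplus$, in which $\EKplus$ has index $2$ over $E_K^2$ (since $\norm{\epsilon_K}=+1$ and $\epsilon_K$ is a fundamental unit, $\EKplus = \langle \epsilon_K\rangle E_K^2$ with $\epsilon_K \notin E_K^2$). So each of the three intermediate groups is either $E_K^2$ or all of $\EKplus$, and the whole question reduces to two independent binary questions about the single unit $\epsilon_K$: (1) is $\epsilon_K$ a square mod $4$, i.e.\ is $\epsilon_K \in \EK4$? and (2) is $\epsilon_K$ a sum of two squares in $K$, i.e.\ is $\epsilon_K \in \EKsquares$? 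Since $\EK4\cap\EKplus \subseteq \EKsquares$ always, a ``yes'' to (1) forces a ``yes'' to (2), leaving exactly the three cases (a), (b), (c); the content is to translate (1) and (2) into the stated congruences.

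\textbf{Question (2), the sum-of-two-squares condition.} By Theorem~\ref{thm:unitthm}(b), $\epsilon_K \in \EKsquares$ iff $(\epsilon_K,-1)_v = 1$ for every $v\mid 2$. When $2$ is ramified or inert in $K$ there is a single such place, and the product formula $\prod_{v\mid 2}(\epsilon_K,-1)_v = 1$ forces that local symbol to be $1$, so $\epsilon_K$ is always a sum of two squares in those cases; thus case (b) can only occur when $2$ splits, i.e.\ $d\equiv 1 \bmod 8$. So assume $d \equiv 1 \bmod 8$, write $2 = \pp\bar\pp$. I would compute $(\epsilon_K, -1)_{\pp}$: since $K_\pp = \QQ_2$, this is the usual Hilbert symbol over $\QQ_2$, and $-1$ is not a square in $\QQ_2$, so $(\epsilon_K,-1)_\pp = 1$ iff $\epsilon_K$ is a norm from $\QQ_2(i)/\QQ_2$ iff $\epsilon_K \equiv 1 \bmod 4$ in $\QQ_2$ (equivalently $\epsilon_K$ is a sum of two squares in $\QQ_2$). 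Now use $m\epsilon_K = \square$ in $K$: locally at $\pp$ this says $\epsilon_K \equiv m \bmod (\QQ_2^*)^2$, and one checks that $m$ is a $2$-adic unit here (it divides the discriminant $D$; when $2$ splits, $D$ is odd), so the condition $\epsilon_K \equiv 1 \bmod (\QQ_2^*)^2$, i.e.\ $\epsilon_K \equiv 1 \bmod 8$ $2$-adically, becomes $m \equiv 1 \bmod 8$... — but the asserted answer is $m\equiv 3 \bmod 4$ for the \emph{failure} of (2), so I would track the class of $m$ in $\QQ_2^*/(\QQ_2^*)^2$ carefully (generated by $-1, 2, 5$), concluding: $\epsilon_K \notin \EKsquares$ exactly when $m \equiv 3 \bmod 4$ (and $d\equiv 1\bmod 8$), matching (b).

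\textbf{Question (1), the square-mod-$4$ condition.} Here $\epsilon_K \in \EK4$ iff for every $v \mid 2$ there are $\alpha_v,\beta_v \in \OO_v$ with $\epsilon_K = \alpha_v^2(1 + 4\beta_v)$, i.e.\ $\epsilon_K$ is a square times a $1$-unit congruent to $1 \bmod 4$ locally. Again I use $m\epsilon_K = \sigma(\alpha)^2$ globally (from \eqref{eq:meps}), so $\epsilon_K \in \EK4$ locally at $v\mid 2$ iff $m^{-1}\sigma(\alpha)^2 = \alpha_v^2(1+4\beta_v)$, and since $\sigma(\alpha)^2$ contributes a local square, this reduces to asking whether $m^{-1}$ — equivalently $m$ — is $\equiv 1 \bmod 4$ in the completion $K_v$, \emph{up to local squares of units}. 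The outcome depends on how $2$ behaves and on the residues of $d$ and $m$ mod $4$ (resp.\ mod $8$), which produces the five subcases (i)--(v): the odd-discriminant cases $d\equiv 1,2,3$ with $m\equiv 1 \bmod 4$ (or $m$ odd when $d\equiv 3$, exploiting that $\sqrt d$ then generates a ramified extension in which more becomes a square mod $4$), and the genuinely $2$-adic cases $d\equiv 2,6 \bmod 8$ paired with $m\equiv 2,6 \bmod 8$. I would organize this as: (i) $2$ splits or $\OO_K$ unramified over $2$, reduce to $\QQ_2$; (ii) $2$ ramified, $K_v = \QQ_2(\sqrt d)$ with $d$ odd, compute $(\OO_v^*)^2 \bmod 4$ directly; (iii) $2$ ramified with $d$ even, $K_v = \QQ_2(\sqrt d)$ a ramified quadratic extension of a different flavor, again compute units mod $4$.

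\textbf{Main obstacle.} The structural reduction (everything is a binary question about $\epsilon_K$, and (1)$\Rightarrow$(2)) is immediate. The real work — and the only place errors can creep in — is the case-by-case local computation of which residue class $m$ must lie in (in $\QQ_2^*/(\QQ_2^*)^2$ or in the relevant quadratically-ramified local field modulo $4$) for $m$ (hence $m\epsilon_K / \square = \epsilon_K$) to be a local square times a $1$-unit, respectively a local norm from the field of Gaussian integers. The ramified cases $d\equiv 2,6 \bmod 8$ are the most delicate: there $K_v$ is a ramified quadratic extension of $\QQ_2$, the group $\OO_v^*/(\OO_v^*)^2$ is larger, and one must carefully identify when $m$ (a $2$-adic unit times possibly one factor of the uniformizer) is a square mod $4$ in $\OO_v$. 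I would also double-check consistency using the product formula and the remark's companion identity (the squarefree part of $D/m$ equals the squarefree part of $-\norm{\epsilon_K-1}$), which gives an independent handle on the $2$-adic valuation and residue of $m$.
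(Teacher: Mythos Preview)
Your overall reduction matches the paper's: since $[\EKplus : E_K^2] = 2$, everything comes down to the two binary questions ``is $\epsilon_K \in \EK4$?'' and ``is $\epsilon_K \in \EKsquares$?'', and since $m\epsilon_K$ is a square in $K$, both questions transfer to questions about the rational integer $m$. The difference lies in how each question is then resolved.

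For (b), your route is a legitimate alternative and arguably shorter: you invoke Theorem~\ref{thm:unitthm}(b) (which has already reduced the problem to $v\mid 2$), dispose of the ramified/inert cases by the product formula, and in the split case compute the $\QQ_2$-Hilbert symbol $(m,-1)_{\QQ_2}$ directly. The paper instead replaces $\epsilon_K$ by $m$ and asks whether $m$ is a global norm from $K(i)$, applying the Hasse norm theorem and checking \emph{all} places (infinite, odd primes dividing $m$, and $v\mid 2$) via the class-field-theory functoriality $\text{N}_{K(i)_v/K_v}^{-1}(K_v^*) = \text{N}_{K_v/\QQ_v}^{-1}\bigl(\text{N}_{\QQ_v(i)/\QQ_v}(\QQ_v(i)^*)\bigr)$. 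Your write-up has one slip to clean up: the condition $(\epsilon_K,-1)_{\QQ_2}=1$ is ``$\epsilon_K$ a norm from $\QQ_2(i)$'', i.e.\ $\epsilon_K\equiv 1\bmod 4$, \emph{not} ``$\epsilon_K$ a square in $\QQ_2^*$'', i.e.\ $\equiv 1\bmod 8$; once you keep that straight, $(\epsilon_K,-1)=(m,-1)$ gives the criterion $m\equiv 3\bmod 4$ for failure, with no need to track full square classes.

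For (a), the paper uses a cleaner device than the direct local-units-mod-$4$ computation you propose. Since $\epsilon_K\in\EK4$ iff $K(\sqrt{\epsilon_K})/K$ is unramified at all finite primes, and $K(\sqrt{\epsilon_K})=K(\sqrt{m})=\QQ(\sqrt d,\sqrt m)$, the question becomes: when is $\QQ(\sqrt d,\sqrt m)/\QQ(\sqrt d)$ unramified at $2$? This is read off from the $2$-parts of the discriminants of $\QQ(\sqrt m)$ and $\QQ(\sqrt{d/m})$ (or $\QQ(\sqrt{md})$) relative to that of $\QQ(\sqrt d)$, a short case split on $d\bmod 4$ and the parity/residue of $m$. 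Your plan of computing $(\OO_v^*)^2\bmod 4$ in each ramified $2$-adic completion would also work, but it is considerably more laborious---especially in cases (iv)--(v), where $m$ is even and hence not a local unit, so the reduction ``$\epsilon_K$ is a square mod $4$ iff $m$ is'' needs extra care---and you have not actually carried it out. The ramification reformulation sidesteps all of that.
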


\begin{proof}
For (a), we have $[\EK4 \cap \EKplus : E_K^2] = 2$ if and only if $\epsilon_K \in  \EK4 \cap \EKplus $.  
Since $m \epsilon_K$ is a square in $K= \QQ(\sqrt d)$, $K(\sqrt \epsilon_K) = K(\sqrt m) =  \QQ(\sqrt d, \sqrt m)$, 
so $\epsilon_K \in  \EK4 \cap \EKplus $ if and only
if the extension $\QQ(\sqrt d, \sqrt m)/\QQ(\sqrt d)$ is unramified at 2.  

Since $m$ is a squarefree divisor of the discriminant of $K$, there are several possibilities: 
(1) $d \equiv 1$ mod 4 and $m$ is a divisor of $d$,
(2) $d \equiv 3$ mod 4 and $m$ is a divisor of $d$,
(3) $d \equiv 3$ mod 4, and $m = 2 d'$ where $d'$ is a divisor of $d$,
(4) $d \equiv 2$ mod 4  and $m$ is a divisor of $d$,
For (1), the extension $\QQ(\sqrt d, \sqrt m)/\QQ(\sqrt d)$ is unramified at 2 if and only if $m \equiv 1$ mod 4, which is
case (\/{\it i}).
For (2), the extension is always unramified at 2, which is
case (\/{\it ii}), and for (3) the extension is never unramified at 2. 
Finally, for (4), precisely one of $m$ and $d/m$ is odd and the extension is unramified at 2 if and only if
this odd number is $\equiv 1$ mod 4, which gives
cases (\/{\it iii}), (\/{\it iv}) and (\/{\it v}).  
This proves (a).

For (b), $m \epsilon_K$ a square in $K$ implies that $\epsilon_K$ is a sum of two squares in $K$ if and only if
$m$ is a sum of two squares in $K$.  Equivalently, $\epsilon_K$ is a sum of two squares in $K$ if and only if
$m$ is a norm from $K(i) = \QQ( \sqrt d, \sqrt{-1})$ to $K$.  

By the Hasse norm theorem, $m$ is a norm from $K(i)$ to $K$ if and only if $m$ is
locally a norm from $K(i)_v$ to $K_v$ for every place $v$.  Since $m$ is positive, $m$ is locally a norm at the
infinite places.  

If $v$ does not divide 2 and does not divide $m$, then $K(i)_v$ is unramified over $K_v$ and $m$ is a unit at $v$, 
so $m$ is locally a norm at $v$.  

It remains to consider those primes $v$ that divide 2 and those that divide $m$. 
If $\QQ_v = \QQ_p$ for $p$ an odd prime $\equiv 1$ mod 4 dividing $m$, then $\QQ_v(i) = \QQ_p$ and
$K(i)_v = K_v$, so $m$ is locally a norm at $v$. 

For the remaining primes, recall that,
by class field theory, the norms $\alpha_v \in K_v$ from $K(i)_v$ to $K_v$ are the elements of $K_v$ such that
$ \text{Norm}_{K_v/\QQ_v} (\alpha_v)$ is a norm from $\QQ_v(i)$ to $\QQ_v$ (see, for example, \cite[Theorem 7.6]{Iw2}.

If $\QQ_v = \QQ_p$ for $p$ an odd prime $\equiv 3$ mod 4 dividing $m$, then $\QQ_v(i) = \QQ_p(i)$ is the unramified
quadratic extension of $\QQ_p$.  Since $m$ divides $2d$, the prime $p$ divides $d$, so $K_v = \QQ_p(\sqrt d )$ is ramified of degree
2 over $\QQ_p$.  Hence $ \text{Norm}_{K_v/\QQ_v} (m) = m^2$, which is a norm from $\QQ_v(i)$ to $\QQ_v$, so 
$m$ is locally a norm at $v$.

Finally, suppose $v$ is a prime dividing 2.  As before, if $\QQ_2(\sqrt {d}\ )$ is of degree 2 over $\QQ_2$, 
then $ \text{Norm}_{K_v/\QQ_v} (m) = m^2$, which is a norm from $\QQ_2(i)$ to $\QQ_2$, so $m$ is a local norm. 
Hence $m$ is locally a norm at all $v$ unless $\QQ_2(\sqrt {d}\ ) = \QQ_2$, which requires $d \equiv 1$ mod 8.
In this case, $m$ would be odd, and the units that are norms from $\QQ_2(i)$ to $\QQ_2$ are the elements
congruent to 1 mod 4.  Hence $m$ is a local norm from $K(i)_v$ to $K_v$ for every place $v$
unless $d \equiv 1$ mod 8 and $m \equiv 3$ mod 4.  

This completes the proof of (b) and hence also the proof of the proposition since
the three cases are exhaustive and mutually exclusive.  
\end{proof}

The following are explicit examples for each of the three possibilities in Theorem \ref{thm:quadfields}:

\begin{enumerate}

\item[(a)]
$d = 30$, $\epsilon_K = 11 + 2 \sqrt{30}$, $\norm{\epsilon + 1} = 24$, $m = 6$.  Here 
$\epsilon_K$ is a square mod 4:   $ \epsilon_K = (1 +  \sqrt{30})^2 + 4(-5) $
and is a sum of two squares in $K$ (but not the sum of two integral squares):  
$ \epsilon_K = (1 + \sqrt{30}/5)^2 + (2 + 2 \sqrt{30}/5)^2$.
\end{enumerate}

\begin{remark} \label{rem:notglobal}
Note that while $\epsilon_K$ is a square mod 4, it is not globally a square mod 4 in the multiplicative sense:
if $\epsilon_K = \alpha^2 (1 + 4 \beta)$ with integers $\alpha$ and $\beta$ in $K$, then
$\alpha$ would be a unit in $K$, hence of the form $\pm \epsilon_K^n$ for some integer $n$. 
Since $\epsilon_K^2 = 241 + 44 \sqrt{30} \equiv 1$ mod 4, this would imply $\epsilon_K \equiv 1$ mod 4, which
it is not. 

\end{remark}

\begin{enumerate}
\item[(b)]
$d = 33$, $\epsilon_K = 23 + 4 \sqrt{33}$, $\norm{\epsilon + 1} = 48$, $m = 3$. Here
$\epsilon_K$ is not the sum of two squares in $K$.

\item[(c)]
$d = 3$, $\epsilon_K = 2 + \sqrt{3}$, $\norm{\epsilon + 1} = 6$, $m = 6$.  Here
$\epsilon_K$ is the sum of two squares in $K$ (but not the sum of two integral squares): 
$ \epsilon_K = ((1 + \sqrt 3)/2)^2 + (1/2)^2 $, and $\epsilon_K$ is not a square mod 4.

\end{enumerate}

\subsection*{Additional remarks}

The unramified quadratic extensions of unit type have other properties that distinguish them from 
generic quadratic extensions, even among the unramified quadratic extensions of $K$.  For example, they have trivial 
Steinitz class over $K$, a property shared by all the units in $\EK4$ whether or not they are totally positive:

\begin{proposition} \label{prop:Steinitz}
If $\epsilon \in \EK4$, then the ring of integers $\OO_L$ in $L = K(\sqrt \epsilon )$ is free (of rank 2)
as a module over $\OO_K$, in fact $\OO_L = \OO_K + \OO_K (\alpha + \sqrt \epsilon)/2$ where $\epsilon = \alpha^2 + 4 \beta$.
\end{proposition}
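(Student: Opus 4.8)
The plan is to verify directly that the proposed module $M = \OO_K + \OO_K\,\theta$, where $\theta = (\alpha + \sqrt{\epsilon})/2$ and $\epsilon = \alpha^2 + 4\beta$ with $\alpha,\beta \in \OO_K$, is exactly the ring of integers $\OO_L$ of $L = K(\sqrt{\epsilon})$. Once that identity is established, freeness of rank $2$ over $\OO_K$ (and hence triviality of the Steinitz class) is immediate, since $\{1,\theta\}$ is visibly an $\OO_K$-basis. So the real content is the equality $M = \OO_L$.

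First I would check that $\theta$ is an algebraic integer: it satisfies $x^2 - \alpha x + (\alpha^2-\epsilon)/4 = x^2 - \alpha x - \beta = 0$, which has coefficients in $\OO_K$ because $(\alpha^2-\epsilon)/4 = -\beta \in \OO_K$ by hypothesis. Hence $\theta \in \OO_L$, so $M \subseteq \OO_L$. For the reverse inclusion, I would argue locally: it suffices to show $M_{\pp} = (\OO_L)_{\pp}$ for every prime $\pp$ of $\OO_K$, and since $M$ and $\OO_L$ are both free $\OO_K$-modules of rank $2$ inside $L$, this amounts to showing the discriminant ideal $\mathfrak{d}(M/\OO_K)$ equals the relative discriminant $\mathfrak{d}_{L/K}$. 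A short computation gives $\disc(1,\theta) = \alpha^2 + 4\beta = \epsilon$, which is a \emph{unit} in $\OO_K$; therefore $\mathfrak{d}(M/\OO_K) = \OO_K$, forcing $\mathfrak{d}_{L/K} = \OO_K$ as well (it divides $\mathfrak{d}(M/\OO_K)$) and simultaneously forcing $M = \OO_L$, since a full-rank submodule of $\OO_L$ whose discriminant already equals the unit ideal cannot be proper. Alternatively, and perhaps more transparently, one notes that $L/K$ is unramified at every finite prime (this is precisely the characterization of $\EK4$ recalled before the proposition, via \cite[Proposition 4.8]{D-V}), so $\mathfrak{d}_{L/K} = \OO_K$; combined with $\disc(1,\theta) = \epsilon \in \OO_K^\times$ and $M \subseteq \OO_L$, the index $[\OO_L : M]$ has square dividing the unit ideal, hence is trivial.

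The step I expect to require the most care is the bookkeeping at the primes dividing $2$: away from $2$ the element $(\alpha+\sqrt{\epsilon})/2$ differs from $\sqrt{\epsilon}$ only by the unit-multiple $2$, so $M_{\pp} = \OO_K[\sqrt{\epsilon}]_{\pp}$ there and there is nothing subtle, but at a prime over $2$ one must use the congruence $\epsilon \equiv \alpha^2 \pmod 4$ in an essential way — exactly as in the classical case $K = \QQ$, where $\OO_{\QQ(\sqrt{d})} = \ZZ + \ZZ\,(1+\sqrt d)/2$ precisely when $d \equiv 1 \pmod 4$. The cleanest route is to invoke the discriminant computation above, which handles all primes uniformly: $\disc(1,\theta) = \epsilon$ is a global unit, so $M$ has unit discriminant over $\OO_K$, which both shows $M$ is maximal (hence $M = \OO_L$) and re-proves that $L/K$ is unramified everywhere. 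I would present the argument in that order: exhibit $\theta$ and its minimal polynomial, compute $\disc(1,\theta) = \epsilon \in E_K \subseteq \OO_K^\times$, conclude $M = \OO_L$, and observe the freeness and triviality of the Steinitz class as corollaries.
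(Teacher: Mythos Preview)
Your argument is correct and reaches the same conclusion as the paper, but via a slightly different technical lemma. The paper invokes the conductor--different formula (Narkiewicz, Proposition~4.12): for $A = \OO_K[\theta]$ one has $\f_A = (\delta_{L/K}(\theta))\,\frak D_{L/K}^{-1}$, and since $L/K$ is unramified $\frak D_{L/K} = \OO_L$, while $\delta_{L/K}(\theta) = 2\theta - \alpha = \sqrt{\epsilon}$ is a unit, so $\f_A = \OO_L$ and hence $A = \OO_L$. You instead compute the discriminant $\disc(1,\theta) = \epsilon \in E_K$ and use the index--discriminant relation to force $[\OO_L : M] = 1$. These are two faces of the same computation (the discriminant is the norm of the different of $\theta$), and your route is arguably more self-contained since it avoids the external citation. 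One small wording issue: you write ``since $M$ and $\OO_L$ are both free $\OO_K$-modules of rank $2$''---freeness of $\OO_L$ is precisely what is to be proved, so you should phrase the comparison locally (where both are free) or simply appeal directly to the index--discriminant relation, which holds for any full-rank $\OO_K$-order $M \subseteq \OO_L$ without assuming $\OO_L$ free. Your actual argument does not use the circular claim, so this is cosmetic.
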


\begin{proof}
By Narkiewicz, \cite[Proposition 4.12]{N}, if $A = \OO_K[a] \subseteq \OO_L$ with $a \in \OO_L$ generating $L$ over $K$, then 
$\f_A = \delta_{L/K} (a) \frak D_{L/K}^{-1}$ where
$\f_A = \{ x \in A \mid x \OO_L \subseteq A \}$,  
$\delta_{L/K} (a)$ is the different of the element $a$ for the extension $L/K$ and
$\frak D_{L/K}$ is the different of the extension.
Since $\frak D_{L/K} = \OO_L$ because $L$ is unramified over
$K$, it follows that $A = \OO_L$ if and only if $(\delta_{L/K} (a)) = \OO_L$.  
The element $a = (\alpha + \sqrt \epsilon)/2$ is a root of
the polynomial $x^2 - \alpha x - \beta $, so $\delta_{L/K} (a) = 2 a - \alpha = \sqrt \epsilon$, which generates the trivial 
ideal in $\OO_L$ since $\sqrt \epsilon$ is a unit.  Hence $A = \OO_L$, i.e., $\OO_L = \OO_K[ (\alpha + \sqrt \epsilon)/2 ]$.
\end{proof}

As noted, the result in Proposition \ref{prop:Steinitz} does not require that $\epsilon$ be totally positive (so the
extension $K(\sqrt \epsilon)/K$ may be ramified at infinity).  The extensions
generated by units congruent to squares modulo 4 (but not necessarily totally positive) 
appear in the work of Haggenm\"uller (\cite{Ha1}, \cite{Ha2})
in the context of `free quadratic', or `QF', extensions of $\OO_K$: these are the rings that are free of rank 2 as modules over
$\OO_K$ that are also separable with a Galois action of order 2 in the sense of Galois theory of rings.  
It is noted in Haggenm\"uller (\cite[Lemma 2.2]{Ha1}) that the rings of integers of
quadratic subfields of the Hilbert class field of $K$ include a complete set of representatives for the nonidentity
elements of the group of isomorphism classes of these rings.

It would be interesting to examine the unramified quadratic extensions of unit type with respect to other
arithmetic questions, for example capitulation questions, involving $K$.

\subsection*{Acknowledgments}
I would like to thank Richard Foote and Hershy Kisilevsky for many helpful conversations. I would also like
to thank the anonymous referee, who passed along the observation that the squarefree part of
$\norm{\epsilon +1}$ gives an integer $m$ with $m \epsilon$ a square in $K$ and suggested a complete answer 
for quadratic fields such as in Theorem \ref{thm:quadfields} would be possible (rather than just examples of
each type as in the original manuscript).

\end{document}